\newtheorem{lemma}{Lemma}
\newtheorem{theorem}{Theorem}
\theoremstyle{remark}
\let\wt\widetilde
\renewcommand{\d}{{\mathrm d}}
\newcommand{\ord}{\operatorname{ord}}
\newcommand\cL{\mathcal{L}}
\newcommand\cF{\mathcal{F}}
\newcommand\cW{\mathcal{W}}
\newcommand{\qbinom}[2]{{\genfrac{[}{]}{0pt}{}{#1}{#2}}_q}
\newcommand{\pbinom}[2]{{\genfrac{[}{]}{0pt}{}{#1}{#2}}_p}
\begin{document}

\title{On the irrationality of generalized $q$-logarithm}

\author{Wadim Zudilin}
\address{School of Mathematical and Physical Sciences, The University of Newcastle, Callaghan NSW 2308, AUSTRALIA}
\email{wzudilin@gmail.com}

\date{11 January 2016. \emph{Revision}: 24 January 2015}

\dedicatory{To my good friends and fine colleagues Peter Bundschuh and Keijo V\"a\"an\"anen}

\begin{abstract}
For integer $p$, $|p|>1$, and generic rational $x$ and $z$, we establish the irrationality of the series
$$
\ell_p(x,z)=x\sum_{n=1}^\infty\frac{z^n}{p^n-x}.
$$
It is a symmetric ($\ell_p(x,z)=\ell_p(z,x)$) generalization of the $q$-logarithmic function ($x=1$ and $p=1/q$ where $|q|<1$),
which in turn generalizes the $q$-harmonic series ($x=z=1$). Our proof makes use of the Hankel determinants built on the
Pad\'e approximations to $\ell_p(x,z)$.
\end{abstract}

\subjclass[2010]{Primary 11J72; Secondary 11C20, 33D15, 41A20}
\keywords{Irrationality; rational approximation; $q$-logarithm; $q$-harmonic series; basis hypergeometric series; Pad\'e approximation; Hankel determinant}

\thanks{The work is supported by the Australian Research Council.}

\maketitle

\section{Historical notes}
\label{s1}

In 1827, a problem was published about the series
\begin{equation}
q+2q^2+2q^3+3q^4+2q^5+4q^6+2q^7+\dotsb=\sum_{n=1}^\infty\frac{q^n}{1-q^n}
\label{qh1}
\end{equation}
(see \cite[Aufgabe 16, p.~99]{Cr27}). It was pointed out that J.~Lambert had shown that the coefficient of $q^n$ in the series
counts the number of divisors of $n$, so that the coefficient 2 may only appear when the exponent of $q$ is a prime number.
The question asked was to find a ``finite expression'' for the $q$-series. The latter converges inside the disc $|q|<1$ and
is usually referred by the name of $q$-harmonic series, because multiplication of the right-hand side by $1-q$ and the termwise limits
$(1-\nobreak q)\*q^n/(1-q^n)\to1/n$ as $q\to1^-$ formally assign to it the divergent harmonic series
(see \cite[Division~2, Problem~32]{PS78} for a precise limiting behaviour of \eqref{qh1} as $q\to1^-$).
As we are aware these days, the series \eqref{qh1} is quite independent of other known $q$-series, even of its relatives \cite{Pu05},
so it is still an open problem to figure out what other functions can be linearly or algebraically linked to it.
However, as first observed by Th.~Clausen in the final paragraph of \cite{Cl28}, one can easily accelerate the convergence of the $q$-harmonic series:
\begin{equation*}
\sum_{n=1}^\infty\frac{q^n}{1-q^n}
=\sum_{n=1}^\infty q^{n^2}\frac{1+q^n}{1-q^n}
\end{equation*}
(see also \cite[Division~8, Problem~74]{PS76}). This computationally practical formula does not hint at an approach to the arithmetic properties,
like the irrationality and transcendence, of the values of $q$-harmonic series.
Curiously enough, the previous problem (Aufgabe~15) on the list in~\cite{Cr27} requests about the irrationality of the numbers $\pi^n$ for integer $n\ge2$.

The first proof of the irrationality of the $q$-harmonic series for $q$ of the form $q=1/p$, where $p>1$ is an integer,
was recorded by P.~Erd\H os \cite{Er48} in 1948. His argument uses the non-periodicity of base $p$ expansion of the number in question
(and so, certain delicate properties of the number-of-divisors function) and cannot be extended to deal with the more general analytical function
\begin{equation}
L_q(z)=\sum_{n=1}^\infty\frac{q^nz^n}{1-q^n}, \qquad |z|<1, \quad |q|<1,
\label{qh3}
\end{equation}
known as the $q$-logarithm. Independently, J.-P.~B\'ezivin \cite{Be88} in 1988 and P.~Borwein \cite{Bo91} in 1991 pushed forward different analytical methods
to establish the irrationality of the series for rational $z$ and $q$ of the form $q=1/p$ with $p\in\mathbb Z\setminus\{0,\pm1\}$, though B\'ezivin's paper
did not mention $L_q(z)$ specifically\,---\,it was observed only later \cite{BV94} that the general results from \cite{Be88} imply the irrationality.
The advantage of Borwein's method \cite{Bo91,Bo92}, in which he uses the Pad\'e approximations to \eqref{qh3}, is that it allows one to ``measure''
the irrationality of the numbers in question; this quantitative counterpart is absent in Erd\H os' method \cite{Er48} and it was also absent
in B\'ezivin's original method \cite{Be88,Be90} until the recent work~\cite{Ro11} of I.~Rochev (see \cite{RV13} for a further development).
The Pad\'e approximation technique as originated in \cite{Bo91,Bo92} was significantly generalized and extended in later works (for example,
\cite{AZ98,BV94,BV05,BV09,BZ07,BZ08,Du96,KRZ06,Ma08,MVZ06,SvA09,Zu02a,Zu04,Zu06} to list a few) to sharpen the irrationality measures of the values
of $q$-harmonic and $q$-logarithm series as well as to prove the irrationality and linear independence results for some close relatives of the series.

B\'ezivin's method from \cite{Be88,Be90} is capable of dealing with a class of generalized $q$-hypergeometric functions that are quite different from~\eqref{qh3}.
It is a ``lucky accident'' that the $q$-logarithm appears as the quotient of two such functions\,---\,it happens to be the logarithmic derivative
of the so-called exponential function
$$
E_q(z)=\sum_{n=0}^\infty\frac{q^{n(n+1)/2}z^n}{\prod_{j=1}^n(1-q^j)}
=\prod_{n=1}^\infty(1+q^nz);
$$
namely, $L_q(z)=zE_q'(-z)/E_q(-z)$ where the derivative is with respect to~$z$. Though the method from \cite{Be88,Be90} underwent modifications and
generalizations in the later works of B\'ezivin himself and other authors \cite{Be98,Be09,Ch01,KRVZ09,Ro11,RV13}, it essentially serves the original class of functions.

The generalization of the $q$-logarithm we discuss here is given by
\begin{align}
\ell_p(x,z)
&=x\sum_{n=1}^\infty\frac{q^nz^n}{1-q^nx}
=\sum_{m,n>0}q^{mn}x^mz^n
\nonumber\\
&=x\sum_{n=1}^\infty\frac{z^n}{p^n-x}, \qquad |x|<|p| \quad\text{and}\quad |z|<|p|,
\label{e01}
\end{align}
where $q=1/p$ is inside the unit disc. In spite of the several remarkable properties of the function, including its symmetry $\ell_p(x,z)=\ell_p(z,x)$ and
the functional equation
\begin{equation*}
\ell_p(x,z)=\frac{xz}{p-x}+z\ell_p\Bigl(\frac xp,z\Bigr),
\end{equation*}
it has not attracted much attention because of its overall arithmetic toughness. The generalized $q$-logarithm \eqref{e01} was a topic
of our joint investigation \cite{BZ08} with P.~Bundschuh; there we constructed explicitly Pad\'e-type approximations to the function
and applied them to the cases when $x$ and $z$ are $q$-multiplicatively dependent\,---\,a quotient of the form $x^k/z^m$ is an integral power of~$q$
for some integers (not simultaneously zero) $k$ and $m$. Some fundamental issues with the Pad\'e approximations to~\eqref{e01} do not allow to apply the latter
for generic $x$ and $z$ even under the standard setting $p\in\mathbb Z\setminus\{0\}$. Apart from the $q$-logarithm specialization ($x=1$),
some other cases of \eqref{e01} are of historical importance, like
$$
\pi_q=1-4\ell_{p^2}(p,-1)
=1+4\sum_{n=0}^\infty\frac{(-1)^nq^{2n+1}}{1-q^{2n+1}}
=1+4\sum_{n=1}^\infty\frac{q^n}{1+q^{2n}}
=\biggl(\sum_{n=-\infty}^\infty q^{n^2}\biggr)^2.
$$
Several such instances, under the $q$-multiplicative dependence, are studied in \cite{BZ07,BZ08} from arithmetic perspectives, both qualitatively and quantitatively.

As no representation of the function \eqref{e01} through the B\'ezivin-type functions is known, the method from \cite{Be88,Be90} and its generalizations
are not applicable for arithmetic studies of the values of the function. Other available techniques for $q$-hyper\-geo\-metric functions based on the ideas
of the classical Siegel--Shidlovsky method, for example~\cite{AKV01}, fail to apply to the function $\ell_p(x,z)$ for similar reasons.

\section{Principal results}
\label{s2}

In this note we combine the Pad\'e approximation construction from \cite{BZ08} with a version of B\'ezivin's method as developed in \cite{KRVZ09}
to prove the following general result.

\begin{theorem}
\label{th1}
For integer $p$ and nonzero rational $x,z$ such that $|p|>1$, $x\notin\{p,\allowbreak p^2,\allowbreak p^3,\dots\}$ and $|z|<|p|$
\textup(to ensure the convergence of the series in~\eqref{e01}\textup),
the value of $\ell_p(x,z)$ is irrational.
\end{theorem}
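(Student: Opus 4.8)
The plan is to combine the Pad\'e-type approximations of \cite{BZ08} with the Hankel-determinant version of B\'ezivin's method developed in \cite{KRVZ09}. Write $q=1/p$ and $\theta=\ell_p(x,z)$. As a preliminary step I would normalise the parameters: iterating $\ell_p(x,z)=\frac{xz}{p-x}+z\,\ell_p(x/p,z)$ finitely often rewrites $\theta$ as $r+z^{k}\ell_p(x/p^{k},z)$ with $r\in\mathbb Q$, $|x/p^{k}|<|p|$ and $x/p^{k}\notin\{p,p^{2},\dots\}$, and since $z\ne0$ and $r\in\mathbb Q$ the rationality of $\theta$ is unaffected; so I may assume $|x|<|p|$ and then, by $\ell_p(x,z)=\ell_p(z,x)$, also $|x|\le|z|$. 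In this range $\theta=\sum_{n\ge1}c_nz^{n}$ with $c_n=\dfrac{xq^{n}}{1-xq^{n}}=\sum_{m\ge1}x^{m}q^{mn}$, so that for each fixed $n$ the shifted sequence $(c_{n+k})_{k\ge0}=\bigl(\sum_{m\ge1}(xq^{n})^{m}(q^{m})^{k}\bigr)_{k\ge0}$ is a sequence of \emph{moments}\,---\,the feature that will make the determinants below computable. From \cite{BZ08}, specialised at the rational $z$, I obtain for each $n\ge1$ rationals $P_n,Q_n$ with common denominator $\Delta_n$ dividing $p^{O(n^{2})}$ times fixed $O(n^{2})$-th powers of the numerators and denominators of $x$ and $z$ and of $\prod_{k\ge1}(p^{k}b-a)$ (where $x=a/b$), of archimedean size $e^{O(n^{2})}$, and with remainders $L_n:=Q_n\theta-P_n$ satisfying $|L_n|=|q|^{\Theta(n^{2})}$. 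The known obstruction is that for generic $x,z$ the denominator $\Delta_n$ that must be cleared can outweigh the decay of $L_n$, so $\Delta_n|L_n|\not\to0$; one cannot even exclude $L_n=0$ for all large $n$, and no single form settles the irrationality.

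Following \cite{KRVZ09}, I would therefore pass to the Hankel determinants
\[
  H^{(n)}_{N}=\det\bigl(L_{n+i+j}\bigr)_{0\le i,j\le N-1}
\]
built on these forms (equivalently, block-Hankel determinants in the pairs $(P_n,Q_n)$). Using the explicit shape of the \cite{BZ08} approximants together with the moment structure\,---\,a Hankel determinant of moments $\sum_{m}w_{m}t_{m}^{k}$ equals $\sum_{m_{1}<\dots<m_{N}}\bigl(\prod_{i}w_{m_{i}}\bigr)\prod_{i<j}(t_{m_{i}}-t_{m_{j}})^{2}$\,---\,I expect a closed evaluation of $H^{(n)}_{N}$ as an explicit, manifestly non-zero ratio of $q$-Pochhammer symbols, carried out by the $q$-binomial determinant identities and basic hypergeometric summations of \cite{BZ08} (whence the appearance of 11C20 and 33D15 in the classification). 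From it I need two things: (i) $H^{(n)}_{N}\neq0$ for all $N\ge1$ once $n$ is large\,---\,transparent when $p\ge2$ and $x>0$, where the measure is positive and there is no cancellation, and forced by the product formula in general; and (ii) a sharp upper bound for $|H^{(n)}_{N}|$ whose exponent of $q$ is cubic in $N$, reflecting the $\prod_{i<j}(t_{m_{i}}-t_{m_{j}})^{2}$ factor.

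To conclude, suppose $\theta=u/v$ with $u,v\in\mathbb Z$, $v\ge1$. A common denominator $\wh\Delta_{n,N}$ of all entries of $H^{(n)}_{N}$ is of size $e^{O(n^{2})}$ as long as $N=O(n)$, and $v\wh\Delta_{n,N}L_{n+i+j}=u\wh\Delta_{n,N}Q_{n+i+j}-v\wh\Delta_{n,N}P_{n+i+j}\in\mathbb Z$ for all $i,j$, so $(v\wh\Delta_{n,N})^{N}H^{(n)}_{N}\in\mathbb Z$. By (i) this integer is non-zero for $n$ large, hence $|H^{(n)}_{N}|\ge(v\wh\Delta_{n,N})^{-N}$; by (ii), for a suitable $N=N(n)\to\infty$ the cubic-in-$N$ decay of $|H^{(n)}_{N}|$ contradicts this lower bound once $n$ is large. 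This is exactly the contradiction delivered by the determinant criterion of \cite{KRVZ09}, so $\theta=\ell_p(x,z)$ is irrational.

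The crux is the evaluation in the second paragraph: one needs a true product formula (or matching two-sided estimates) for the Hankel determinant of the \cite{BZ08} Pad\'e data, uniformly in the rational parameters. The qualitative non-vanishing is already delicate outside the ``positive'' regime $p\ge2$, $x>0$, since signs enter through $q=1/p<0$ and through negative $x$ and only an explicit evaluation rules out accidental cancellation; and the quantitative part must be sharp enough that the cubic-in-$N$ gain strictly beats the denominator growth $\wh\Delta_{n,N}^{\,N}$ along one well-chosen trajectory $N=N(n)$. Making the constants from the preliminary normalisation ($|x|,|z|<|p|$, $|x|\le|z|$) fit together so that a single such trajectory works for every admissible pair $(x,z)$ is the remaining, mostly clerical, difficulty.
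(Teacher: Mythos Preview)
Your architecture---Pad\'e forms from \cite{BZ08}, Hankel determinants of the remainders, contradiction between an integer lower bound and a cubic-in-size upper bound---is exactly the paper's. The gap is the ``crux'' you yourself flag: you posit a closed product evaluation of $H_N^{(n)}$, but the moment identity you invoke applies to Hankel determinants of the raw coefficients $c_n=\sum_{m\ge1}x^mq^{mn}$, not to the Pad\'e remainders $L_n$; no such product formula is known for the latter, and the paper does not attempt one. Both of your items (i) and (ii) are obtained without any evaluation.

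For (ii), the paper introduces the backward shift $N\colon v_n^*\mapsto v_{n-1}^*$ and the $q$-difference operator $D_l=(N;q)_l=\prod_{j=0}^{l-1}(I-q^jN)$, and proves (Lemma~\ref{lem1}) that for any $w_n=q^{(n+1)t}\prod_{k=1}^nH(q^k)$ with $H\in1+q\mathbb C[[q]]$ one has $\ord_qD_lw_n\ge nl-l(l-1)/2$. Since the normalised remainder $v_n^*$ is a $t$-sum of such terms, applying $D_j$ to row $j$ of the Hankel matrix (an elementary row operation, so the determinant is unchanged) forces $\ord_qV_n^*\ge\sum_{j=0}^{n-1}\bigl((n-1)j-j(j-1)/2\bigr)=n(n-1)(2n-1)/6$, hence $|V_n(\ell_p(x,z))|\le|p|^{-n^3/3}\exp(O(n^2))$, uniformly in the parameters and with no sign analysis whatsoever. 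For (i), the generating series $\sum_{n\ge0}v_n(\ell_p(x,z))y^n$ is visibly not a rational function of $y$, so Kronecker's criterion gives $V_n(\ell_p(x,z))\ne0$ for infinitely many $n$; combined with the denominator $(x_0z_0)^{2n(n-1)}$ this finishes. Your preliminary normalisation and the two-index family $H_N^{(n)}$ then become unnecessary: the single one-parameter Hankel $V_n=\det(v_{j+l})_{0\le j,l\le n-1}$ already suffices.
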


A general framework of the approach we use in our proof of the theorem is outlined in \cite{Zu15}.

We now introduce the $q$-hypergeometric function
\begin{gather}
\cF_p(x,y,z)=\sum_{n=0}^\infty\frac{\prod_{k=1}^n(p^k-1)\cdot(-pz)^n}{\prod_{k=1}^n(p^k-x)(p^k-y)}
=\sum_{n=0}^\infty\frac{(-1)^n\prod_{k=1}^n(1-q^k)\cdot q^{n(n-1)/2}z^n}{\prod_{k=1}^n(1-q^kx)(1-q^ky)},
\label{e01b}
\\
|x|<|p| \quad\text{and}\quad |z|<|p|.
\nonumber
\end{gather}
This is an exemplar function to which the known arithmetic methods \cite{AKV01,Be88} are not applicable because of the presence
of the $q$-Pochhammer product $(q;q)_n=\prod_{k=1}^n(1-\nobreak q^k)$ in the numerator of the $n$-th term of the series.
A hypergeometric identity and Theorem~\ref{th1} allow us to establish the following corollary.

\begin{theorem}
\label{th2}
The identity
\begin{equation}
\cF_p(x,z,xz)=\frac{(1-x)(1-z)}{pxz}\,\ell_p(px,pz)
\label{e01c}
\end{equation}
is valid. In particular, for an integer $p\ne0,\pm1$ and rational $x$ and $z$ such that
$0<|x|,|z|<1$, the value of the hypergeometric function $\cF_p(x,z,xz)$ is irrational.
\end{theorem}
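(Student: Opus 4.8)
To prove Theorem~\ref{th2} I would first dispose of the irrationality claim, which is immediate once the identity~\eqref{e01c} is in hand. For an integer $p\ne0,\pm1$ and nonzero rationals $x,z$ with $|x|,|z|<1$, the numbers $px$ and $pz$ are nonzero rationals with $|pz|=|p|\,|z|<|p|$, and $px\notin\{p,p^2,p^3,\dots\}$ because $px=p^k$ would force $x=p^{k-1}$, which is impossible since $|p^{k-1}|\ge1>|x|$ for every $k\ge1$; hence Theorem~\ref{th1} applies and $\ell_p(px,pz)$ is irrational. As $x,z\ne0,1$ and $p\ne0$, the prefactor $(1-x)(1-z)/(pxz)$ on the right of~\eqref{e01c} is a nonzero rational number, so $\cF_p(x,z,xz)$ is irrational. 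Thus the substance of the theorem is the identity~\eqref{e01c}.

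To establish~\eqref{e01c}, fix $x$, put $q=1/p$, and regard both sides as power series in $z$. The plan is to show that each of them, say $\Phi(x,z)$, satisfies the first-order $q$-difference equation
\begin{equation}
\Phi(x,z)=(1-x)+\frac{x(1-z)}{1-qz}\,\Phi(x,qz),\qquad\Phi(x,0)=1.
\label{eq:qdeprop}
\end{equation}
For the right-hand side $G(x,z)=\frac{(1-x)(1-z)}{pxz}\,\ell_p(px,pz)$ this is a short computation: rearranging the Lambert series gives $\ell_p(px,pz)=pxz\sum_{n\ge0}z^n/(1-q^nx)$, whence $G(x,z)=(1-x)(1-z)\sum_{n\ge0}z^n/(1-q^nx)$, so that $G(x,0)=1$ and~\eqref{eq:qdeprop} follows after an elementary manipulation (it boils down to $\sum_{n\ge0}z^n=1/(1-z)$). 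For the left-hand side $F(x,z)=\cF_p(x,z,xz)=\sum_{n\ge0}c_n$ I would read off from the series in~\eqref{e01b} the term recurrence $(1-q^{n+1}x)(1-q^{n+1}z)\,c_{n+1}=-(1-q^{n+1})\,xz\,q^nc_n$, sum it over $n\ge0$, peel off the $n=0$ contribution, and simplify the $q$-Pochhammer ratios so as to recognise the remaining sum as $\frac{x(1-z)}{1-qz}\,\cF_p(x,qz,qxz)=\frac{x(1-z)}{1-qz}F(x,qz)$, which is precisely~\eqref{eq:qdeprop} (and $F(x,0)=c_0=1$). Finally, since $|q|<1$, iterating~\eqref{eq:qdeprop} expresses the coefficient of $z^k$ in any solution through the lower ones as long as $x\ne1$ and $1-xq^k\ne0$, i.e.\ for $x\notin\{1,p,p^2,\dots\}$; hence~\eqref{eq:qdeprop} has a unique solution analytic at $z=0$, so $F=G$ for every such $x$, in particular throughout the disc $0<|x|<1$ relevant to the corollary, and the remaining values of $x$ are then covered by meromorphic continuation.

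I expect the real obstacle to be the verification of~\eqref{eq:qdeprop} for the left-hand side: recognising the shifted function $F(x,qz)=\cF_p(x,qz,qxz)$ inside the reindexed series is a contiguous relation for the $q$-hypergeometric series $\cF_p(x,z,xz)$, and although it is in principle routine, arranging the $q$-Pochhammer cancellations so that exactly the factor $x(1-z)/(1-qz)$ emerges requires some care. (One could instead try to deduce~\eqref{e01c} from a known transformation of basic hypergeometric series, but the self-contained route through~\eqref{eq:qdeprop} avoids matching normalisations.) As a sanity check, the coefficients of $z^0$, $z^1$, and $z^2$ on the two sides of~\eqref{e01c} already agree, which makes the plan credible.
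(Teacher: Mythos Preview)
Your deduction of the irrationality from Theorem~\ref{th1} and your verification that the right-hand side $G(x,z)=(1-x)(1-z)\sum_{n\ge0}z^n/(1-q^nx)$ satisfies the first-order $q$-difference equation are both correct. The paper, however, takes precisely the route you set aside in your parenthetical remark: it obtains~\eqref{e01c} by specialising a limiting case of the ${}_3\phi_2$ transformation \cite[Eq.~(III.9)]{GR04} (let $b\to\infty$, then put $a=c=q$, $d=qx$, $e=qz$), which directly gives
\[
(1-x)(1-z)\sum_{n\ge0}\frac{z^n}{1-q^nx}
={}_2\phi_2\biggl(\begin{matrix} q,\,q \\ qx,\,qz \end{matrix}\biggm|q,xz\biggr)=\cF_p(x,z,xz).
\]

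Your alternative via a $q$-difference equation can be made to work, but the execution you sketch for the left-hand side has a real gap. Summing the term recurrence $(1-q^{n+1}x)(1-q^{n+1}z)c_{n+1}=-(1-q^{n+1})xzq^nc_n$ over $n\ge0$ does \emph{not} produce your equation~\eqref{eq:qdeprop}: the sums $\sum_{n\ge0}q^{kn}c_n$ that appear are $\cF_p(x,z,q^kxz)$, i.e.\ shifts in the \emph{third} argument of $\cF_p$, and what you get is a three-term relation among $\cF_p(x,z,xz)$, $\cF_p(x,z,qxz)$ and $\cF_p(x,z,q^2xz)$, not a relation involving $\cF_p(x,qz,qxz)=F(x,qz)$ with the \emph{second} argument shifted. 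The contiguous relation you actually need is not a consequence of the term ratio alone. One honest way to close your argument is to expand $F(x,z)$ as a power series in $z$ and evaluate the coefficient of $z^m$ (a terminating ${}_2\phi_1$) by the $q$-Chu--Vandermonde sum; this matches the coefficient of~$G$ and hence proves~\eqref{eq:qdeprop} for $F$. But that is a genuine summation identity rather than the ``$q$-Pochhammer bookkeeping'' you anticipate, and at that point the self-contained route is no longer much lighter than the paper's single citation of a transformation formula.
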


In particular, for \eqref{qh1} we obtain
$$
\sum_{n=1}^\infty\frac{q^n}{1-q^n}=\sum_{n=1}^\infty\frac{(-1)^{n-1}q^{n(n+1)/2}}{(1-q^n)\,\prod_{k=1}^n(1-q^k)}
$$
which resembles the counterpart of the $q$-harmonic series in the ``$\frac12$-logarithmic derivative of the Dedekind eta-function'' \cite[Remark on p.~952]{Za01}.

Finally, we remark that the results can be given for non-integer $p=r/s$, $|p|>\nobreak1$, as well under a customary in such situations assumption
$\log|r|>c\log|s|$ for some computable constant $c>0$. Furthermore, irrationality measures for the numbers in question can be produced
showing that they are not Liouvillian.
To make our exposition clear and concise, we do not pursue such quantitative issues here but place a related comment after our proof
of Theorem~\ref{th1}, at the end of Section~\ref{s4}.

\medskip
The standard $q$-notation \cite{GR04} we use below include the (multiple) $q$-Pochhammer symbol
$$
(a_1,a_2,\dots,a_r;q)_n=\prod_{j=1}^n(1-a_1q^{j-1})(1-a_2q^{j-1})\dotsb(1-a_rq^{j-1})
\quad\text{for $n=0,1,2,\dots,\infty$}
$$
(the empty product for $n=0$ is set to be 1), the $q$-binomial coefficients
$$
\qbinom nk=\frac{(q;q)_n}{(q;q)_k\cdot(q;q)_{n-k}}
\quad\text{for $k=0,1,\dots,n$ and $n=0,1,2,\dots$},
$$
and the basic hypergeometric function
$$
{}_r\phi_s\biggl(\begin{matrix} a_1, \, a_2, \, \dots, \, a_r \\ b_1, \, \dots, \, b_s \end{matrix} \biggm|q,z\biggr)
=\sum_{n=0}^\infty\frac{(a_1,a_2,\dots,a_r;q)_n}{(q,b_1,\dots,b_s;q)_n}\bigl((-1)^nq^{n(n-1)/2}\bigr)^{s+1-r}z^n.
$$

\section{Pad\'e approximations}
\label{s3}

In \cite{BZ08}, the following Pad\'e-type approximations to the function \eqref{e01} were constructed:
\begin{align*}
I_n(q,x,z)
&=(xz)^{n+1}\sum_{t=0}^\infty\frac{(q^{t+1};q)_n\cdot q^{(n+1)t}z^t}{(q^{n+1+t}x;q)_{n+1}}
\\
&=(xz)^{n+1}\frac{(q;q)_n}{(q^{n+1}x;q)_{n+1}}\cdot{}_2\phi_1\biggl(\begin{matrix} q^{n+1}, \, q^{n+1}x \\ q^{2n+2}x \end{matrix} \biggm|q,q^{n+1}z\biggr)
\end{align*}
(we choose $m=n$ in the notation of that paper). It is shown in \cite[Lemma 2]{BZ08} that
$$
I_n(q,x,z)=A_n(p)\ell_p(x,z)-B_n(p)-C_n(p),
$$
where $A_n(p)=A_n(p,x)$, $B_n(p)=B_n(p,x)$ and $C_n(p)=C_n(p,x,z)$ are given by the formulae
\begin{align*}
A_n(p)
&=p^{(n+1)(3n+2)/2}x^{-n}\sum_{k=0}^n(-1)^kp^{k(k+1)/2}
\frac{\prod_{j=1}^n(p^{k+j}-x)}{(p;p)_k(p;p)_{n-k}}\,z^{-k},
\displaybreak[2]\\
B_n(p)
&=p^{(n+1)(3n+2)/2}x^{-n+1}\sum_{k=0}^n(-1)^kp^{k(k+1)/2}
\frac{\prod_{j=1}^n(p^{k+j}-x)}{(p;p)_k(p;p)_{n-k}}
\sum_{l=1}^k\frac{z^{l-k}}{p^l-x},
\displaybreak[2]\\
C_n(p)
&=p^{(n+1)(3n+2)/2}z
\sum_{l=0}^{n-1}\frac{x^{-l}}{p^{n-l}-z}
\sum_{k=0}^l(-1)^k\pbinom nk\pbinom{n+l-k}np^{(n-k)(n-k+1)/2}x^k.
\end{align*}
Denote
$$
D_n(p)=D_n(p,x,z)=(xz)^n\frac{\prod_{j=1}^n(p^j-1)(p^j-x)(p^j-z)}{p^{3n(n+1)/2}}
=(xz)^n(q,qx,qz;q)_n.
$$
It follows that
\begin{gather*}
\wt A_n(p)=D_n(p)A_n(p)\in\mathbb Z[p,x,z], \quad
\wt B_n(p)=D_n(p)B_n(p)\in\mathbb Z[p,x,z] \\
\text{and}\quad
\wt C_n(p)=D_n(p)C_n(p)\in\mathbb Z[p,x,z],
\end{gather*}
with the degrees of the polynomials in $x$ and $z$ not exceeding $2n$.
We now define
$$
v_n(\mu)=v_n(\mu;p,x,z)=\wt A_n(p)\mu-\wt B_n(p)-\wt C_n(p),
$$
a linear form in $\mu$ with coefficients from $\mathbb Z[p,x,z]$, and summarise the above as follows:
$$
v_n(\ell_p(x,z))=(xz)^{2n+1}\frac{(q,q,qx,qz;q)_n}{(q^{n+1}x;q)_{n+1}}
\cdot{}_2\phi_1\biggl(\begin{matrix} q^{n+1}, \, q^{n+1}x \\ q^{2n+2}x \end{matrix} \biggm|q,q^{n+1}z\biggr).
$$
Though the symmetry of this expression is not transparent, it follows from one of Heine's classical transformations,
namely, from \cite[Eq.~(III.1)]{GR04}.

\section{Hankel determinants}
\label{s4}

For $n=1,2,\dots$, define the Hankel determinants
$$
V_n(\mu)=V_n(\mu;p,x,z)=\det_{0\le j,l\le n-1}\bigl(v_{j+l}(\mu)\bigr).
$$
These are polynomials in $\mathbb Z[\mu,p,x,z]$ of degree $n$ in $\mu$ and of degree at most $2n(n-1)$ in each of the variables $x$ and $z$.
Choosing
\begin{equation}
v_n^*=(xz)^{-2n-1}v_n(\ell_p(x,z))
=\frac{(q,q,qx,qz;q)_n}{(q^{n+1}x;q)_{n+1}}
\cdot{}_2\phi_1\biggl(\begin{matrix} q^{n+1}, \, q^{n+1}x \\ q^{2n+2}x \end{matrix} \biggm|q,q^{n+1}z\biggr),
\label{vs}
\end{equation}
we obtain
$$
V_n(\ell_p(x,z))
=\det_{0\le j,l\le n-1}\bigl((xz)^{2j+2l+1}v_{j+l}^*\bigr)
=(xz)^{n(2n-1)}\det_{0\le j,l\le n-1}(v_{j+l}^*)
=(xz)^{n(2n-1)}V_n^*.
$$
It is immediate from~\eqref{vs} that the $q$-expansion of $v_n^*$ starts from the constant term, $v_n^*=1+O(q)$, so that
$v_n^*=|p|^{O(n)}$ as $n\to\infty$ (see also \cite[Lemma~5]{BZ08}). Our crucial observation here is that the $q$-expansion
of $V_n^*$ starts from at least $q^{n(n-1)(2n-1)/6}$, so that $|V_n^*|\le|q|^{n^3/3}\exp(O(n^2))$.
In order to see that we introduce the backward shift operator $N\colon v_n\mapsto v_{n-1}$
on the indices, its zeroth power $I\colon v_n\mapsto v_n$\,---\,the identity operator, and define
\begin{equation}
D_l=(N;q)_l=\prod_{j=0}^{l-1}(I-q^jN)=\sum_{k=0}^l(-1)^kq^{k(k-1)/2}\qbinom lkN^k.
\label{Dl}
\end{equation}

\begin{lemma}
\label{lem1}
Let $H(q)=1+\sum_{r=1}^\infty a_rq^r$ be a formal power series starting from $1$ and $t$ a nonnegative integer.
Define
$$
w_n=w_n(H,t)=q^{(n+1)t}\prod_{k=1}^nH(q^k).
$$
Then for $n\ge l$ the $q$-expansion of $D_lw_n$ starts at least from $nl-l(l-1)/2$.
\end{lemma}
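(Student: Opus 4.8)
The plan is to prove the statement by induction on $l$, treating the operator $D_l = (N;q)_l$ through its factorisation $D_l = (I - q^{l-1}N)D_{l-1}$. The base case $l=0$ is trivial: $D_0 w_n = w_n = q^{(n+1)t}\prod_{k=1}^n H(q^k)$, and since each $H(q^k) = 1 + O(q^k)$ is a power series with constant term $1$, the product has constant term $1$; hence the $q$-order of $w_n$ equals $(n+1)t \ge 0 = n\cdot 0 - 0$, as required (in fact with room to spare when $t>0$). For the inductive step, I would set $y_n = D_{l-1}w_n$, so by hypothesis $\ord_q y_n \ge n(l-1) - (l-1)(l-2)/2$ for all $n \ge l-1$, and then compute $D_l w_n = (I - q^{l-1}N)y_n = y_n - q^{l-1}y_{n-1}$.

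The key point is that the hypothesised lower bound $\ord_q y_n \ge n(l-1) - (l-1)(l-2)/2$ is \emph{increasing} in $n$, with increment exactly $l-1$ per unit step in $n$. Therefore $\ord_q(q^{l-1}y_{n-1}) = (l-1) + \ord_q y_{n-1} \ge (l-1) + (n-1)(l-1) - (l-1)(l-2)/2 = n(l-1) - (l-1)(l-2)/2$, which is the same lower bound as for $\ord_q y_n$. Subtracting two power series each of $q$-order $\ge n(l-1) - (l-1)(l-2)/2$ gives $\ord_q(D_l w_n) \ge n(l-1) - (l-1)(l-2)/2$. To finish I must gain one more unit of $l$ in the linear coefficient, i.e.\ improve this to $nl - l(l-1)/2$. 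The gain has to come from cancellation of the leading terms of $y_n$ and $q^{l-1}y_{n-1}$: I would track not just the order but the actual leading coefficient of $y_n = D_{l-1}w_n$ as a function of $n$, show it has the shape $c_{l-1}\,q^{n(l-1)-(l-1)(l-2)/2}(1 + O(q))$ with the \emph{same} constant $c_{l-1}$ for all large $n$ (this is where the precise combinatorics of $(N;q)_{l-1}$ acting on a product of the $H(q^k)$ enters), so that in $y_n - q^{l-1}y_{n-1}$ the two terms of order $n(l-1)-(l-1)(l-2)/2$ cancel, leaving order $\ge n(l-1)-(l-1)(l-2)/2 + 1$. A short check that $n(l-1) - (l-1)(l-2)/2 + 1 \ge nl - l(l-1)/2$ reduces to $n \ge l$, exactly the hypothesis, completes the induction.

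The main obstacle is the last step: verifying that the leading ($q$-lowest) coefficient of $D_{l-1}w_n$ stabilises in $n$. One clean way to organise this is to expand $D_{l-1}w_n = \sum_{k=0}^{l-1}(-1)^k q^{k(k-1)/2}\qbinom{l-1}{k}w_{n-k}$ and, in each $w_{n-k} = q^{(n-k+1)t}\prod_{j=1}^{n-k}H(q^j)$, isolate the contribution of $q$-degree close to the minimum: the factors $H(q^j)$ with $j$ large contribute only their constant term $1$ at low order, so the low-order part of $w_{n-k}$ depends on $n$ only through an overall power of $q$ times an $n$-independent truncated product $\prod_{j\ge1}H(q^j)$. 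Pulling out that common $n$-independent series, the sum over $k$ becomes (a power of $q$ times) $\sum_k(-1)^k q^{k(k-1)/2}\qbinom{l-1}{k}q^{-k\cdot(\text{stuff})}$, which one recognises via the $q$-binomial theorem $(a;q)_{l-1} = \sum_k(-1)^k q^{k(k-1)/2}\qbinom{l-1}{k}a^k$ as a value of $(a;q)_{l-1}$; choosing the bookkeeping so that $a = q^{\text{positive}}$ makes this $1 + O(q)$ with leading coefficient $1$, independent of $n$. Granting this, the cancellation in the inductive step is automatic and the stated bound $nl - l(l-1)/2$ drops out. I expect the routine part to be purely the exponent bookkeeping in $w_n$ (keeping track of $q^{(n+1)t}$ and $q^{k(k-1)/2}$), while the conceptual content is the stabilisation-of-leading-coefficient claim and its packaging as a $q$-Pochhammer evaluation.
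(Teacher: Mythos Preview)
Your inductive setup is natural, but the arithmetic at the crucial point is wrong, and the error hides a genuine conceptual gap. You claim that
\[
n(l-1)-\frac{(l-1)(l-2)}{2}+1\ \ge\ nl-\frac{l(l-1)}{2}
\]
``reduces to $n\ge l$''. Subtracting, the inequality is equivalent to $1\ge n-(l-1)$, that is $n\le l$. So your plan only works when $n=l$; for $n>l$ the gap between the bound you obtain and the one you need is $n-l+1$, not~$1$. Cancelling merely the single leading coefficient of $y_n=D_{l-1}w_n$ against that of $q^{l-1}y_{n-1}$ gains exactly one power of~$q$, and no finite list of ``stabilised'' leading coefficients can supply a gain that grows linearly with~$n$. (You can already see the phenomenon at $l=2$, $t=0$: with $u_n=(I-N)w_n=w_{n-1}(H(q^n)-1)$ one has $\ord_q u_n=n$, and $u_n-qu_{n-1}$ must reach order $2n-1$; the cancellation is $n-1$ terms deep, not one.) The $q$-binomial repackaging you sketch in the last paragraph does not rescue this: for $|S|$ fixed in your expansion, the truncated alternating sum $\sum_{k=0}^{l-\max S}(-1)^kq^{k(k-1)/2}\qbinom{l}{k}$ has no closed $q$-Pochhammer evaluation and does not automatically furnish the missing $q^{n-l+1}$.

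The remedy, and this is what the paper does, is to strengthen the inductive hypothesis far beyond ``order $\ge n(l-1)-(l-1)(l-2)/2$ with fixed leading coefficient''. One shows that $D_lw_n(H,t)$ lies in $q^{\,nl-l(l-1)/2}\cW_{n-l}$, where $\cW_m$ is the $\mathbb C[q]$-span of the family $\{w_m(H,s):s\ge0\}$. The point is that applying $I-q^lN$ to $q^{\,nl-l(l-1)/2}w_{n-l}(H,m)$ produces exactly the factor $q^mH(q^{n-l})-1$; for $m=0$ this is $O(q^{\,n-l})$ outright, and for $m\ge1$ one splits it as $(q^m-1)+q^m\sum_{r\ge1}a_rq^{\,r(n-l)}$, each piece reassembling into $q^{\,n-l}$ times elements of $\cW_{n-l-1}$. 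So every inductive step gains $n-l$ powers of~$q$, not one, and the induction closes. In short: track the \emph{structure} $D_lw_n\in q^{\,nl-l(l-1)/2}\cW_{n-l}$, not just the $q$-order and a leading constant.
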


\begin{proof}
We extend the action of our operators to the linear spaces
$$
\cW_n=\bigoplus_{m=0}^\infty\{P_m(q)w_n(H,m):P_m(q)\in\mathbb C[q]\}\subset\mathbb C[[q]]
$$
with the goal to demonstrate, by induction on $l$, that the image of an element from $\cW_n$ under $D_l$ belongs to $q^{nl-l(l-1)/2}\cW_{n-l}$
for $l\ne n$. In particular, this will imply the lemma.

The induction base ($l=0$) is trivial, because $nl-l(l-1)/2=0$ for any $n$ in this case.
Assume that the required statement is established for an $l$,
so that for each integer $t\ge0$ we have
$$
D_lw_n(H,t)=q^{nl-l(l-1)/2}\sum_{m=0}^\infty P_m(q)w_{n-l}(H,m)
$$
with some $P_m(q)=P_{m,n,l,t}(q)\in\mathbb C[q]$. Since $D_{l+1}=(I-q^lN)D_l$, in order to verify the statement for $l+1\le n$
we need to demonstrate that $(I-q^lN)(q^{nl-l(l-1)/2}w_{n-l}(H,m))$ belongs to $q^{n(l+1)-l(l+1)/2}\cW_{n-l-1}$ for any $m\ge0$.
We have
\begin{align*}
&
(I-q^lN)(q^{nl-l(l-1)/2}w_{n-l}(H,m))
\\ &\quad
=q^{nl-l(l-1)/2}\cdot q^{(n-l+1)m}\prod_{k=1}^{n-l}H(q^k)
-q^l\cdot q^{(n-1)l-l(l-1)/2}\cdot q^{(n-l)m}\prod_{k=1}^{n-l-1}H(q^k)
\\ &\quad
=q^{n(l+1)-l(l+1)/2}\cdot(q^mH(q^{n-l})-1)\cdot q^{(n-l)(m-1)}\prod_{k=1}^{n-l-1}H(q^k)
\\ &\quad
=q^{n(l+1)-l(l+1)/2}\biggl((-1+q^m)w_{n-l-1}(H,m-1)+q^m\sum_{r=1}^\infty a_rw_{n-l-1}(H,m+r-1)\biggr),
\end{align*}
with the term $(-1+q^m)w_{n-l-1}(H,m-1)$ absent when $m=0$.
This completes the proof of the inductive step, hence of the whole statement and the lemma.
\end{proof}

We remark that Lemma~\ref{lem1} applies to general hypergeometric terms
$$
\frac{(a_1,\dots,a_r;q)_n}{(b_1,\dots,b_s;q)_n}\,q^{(n+1)t}, \qquad\text{where $t$ is a nonnegative integer},
$$
hence to the (finite or infinite) sums of such terms over $t$, as they belong to the vector space $\cW_n$.
In particular, it applies to
$$
v_n^*=\sum_{t=0}^\infty z^t\frac{(q,qx,qz,q^{t+1};q)_n\cdot q^{(n+1)t}}{(q^{n+1+t}x;q)_{n+1}},
$$
so that $q$-expansion of $D_lv_n^*$ starts at least from $q^{nl-l(l-1)/2}$ for any $l\le n$.
Acting on the $j$-th row of the matrix $(v_{j+l}^*)_{0\le j,l\le n-1}$ by $D_j$, which results in the elementary
manipulations of the rows in accordance with the definition \eqref{Dl} of $D_j$, we get the newer matrix $(a_{j,l})_{0\le j,l\le n-1}$ of the same
determinant $V_n^*$, whose entries in the $j$-th row have $q$-order at least $nj-j(j-1)/2$. Thus,
$$
\ord_qV_n^*\ge\sum_{j=0}^{n-1}\biggl((n-1)j-\frac{j(j-1)}2\biggr)=\frac{n(n-1)(2n-1)}6.
$$

The details of the derivation of the estimate
$$
|V_n^*|\le|q|^{n^3/3}\exp(O(n^2)) \qquad\text{as}\quad n\to\infty
$$
from $\ord_qV_n^*\ge n^3/3+O(n^2)$ can be borrowed from the proofs of Lemma 2 and Proposition 3 in \cite{KRVZ09}.
Therefore,
\begin{equation}
|V_n(\ell_p(x,z))|\le|p|^{-n^3/3}\exp(O(n^2)) \qquad\text{as}\quad n\to\infty.
\label{Vn}
\end{equation}

\begin{proof}[Proof of Theorem~\textup{\ref{th1}}]
If $\ell_p(x,z)$ were rational then $(x_0z_0)^{2n(n-1)}V_n(\ell_p(x,z))$ would be an integer for all $n\ge0$, where $x_0$ and $z_0$
denote the denominators of $x$ and~$z$. It follows from Kronecker's criterion \cite[Division~7, Problem~24]{PS76} and the fact that
the series
$$
\sum_{n=0}^\infty v_n(\ell_p(x,z))y^n
=xz\sum_{t=0}^\infty(qz)^t\sum_{n=0}^\infty\frac{(q,qx,qz,q^{t+1};q)_n\cdot(q^tx^2z^2y)^n}{(q^{n+1+t}x;q)_{n+1}}
$$
is not rational (as the interior basic hypergeometric series are not), that this integer is different from zero for infinitely many~$n$, so that
$$
|(x_0z_0)^{2n(n-1)}V_n(\ell_p(x,z))|\ge1
$$
for such $n$. The resulting estimate contradicts the bound in~\eqref{Vn} for sufficiently large $n$, hence $\ell_p(x,z)$ must be irrational.
\end{proof}

Another proof of Theorem~\ref{th1} can be given, without the reference to the Hankel determinants and Kronecker's criterion.
It follows from the definition \eqref{Dl} and Lemma~\ref{lem1} that
\begin{align*}
v_n'(\mu)
&=(xz)^{2n+1}p^{n(n-1)/2}\sum_{k=0}^n(-1)^kp^{-k(k-1)/2-k(n-k)}\pbinom nk(xz)^{k-2n-1}v_{2n-k}(\mu)
\\
&=\sum_{k=0}^n(-1)^kp^{(n-k)(n-k-1)/2}\pbinom nk(xz)^kv_{2n-k}(\mu)\in\mathbb Z[p,x,z]\mu+\mathbb Z[p,x,z]
\end{align*}
for all $n$ and $v_n'(\ell_p(x,z))=(xz)^{2n+1}p^{n(n-1)/2}D_nv_{2n}^*$ satisfies $\ord_qv_n'\ge n(n+1)$, hence is bounded:
$$
|v_n'(\ell_p(x,z))|\le|p|^{-n^2}\exp(o(n^2)).
$$
Assuming the nonvanishing $v_n'(\ell_p(x,z))\ne0$ is proven for a sufficiently dense
sequence of indices $n$, the argument allows not only to demonstrate the irrationality of $\ell_p(x,z)$ but also
to show that it is not a Liouville number by giving the related estimate for its irrationality exponent.

\section{Hypergeometric identity}
\label{shyp}

Before going into details of the proof of identity \eqref{e01c} in Theorem~\ref{th2},
we would like to mention that the inspiration for it comes from the paper \cite{Ma08} by T.~Matala-aho.
A key identity there is
\begin{equation}
(1-t)\sum_{n=0}^\infty\frac{t^n}{(qb;q)_n}
=\sum_{n=0}^\infty\frac{q^{n^2}(bt)^n}{(qb,qt;q)_n},
\qquad |q|<1, \quad |t|<1;
\label{E01}
\end{equation}
he then studies in \cite{Ma08} arithmetic properties of the series
on the right-hand side of~\eqref{E01} by applying Pad\'e-type
techniques to the left-hand side. Identity~\eqref{E01} is classical and established in \cite[Eq.~(12.3)]{Fi88}.
A natural generalization of~\eqref{E01}, namely the form \eqref{E04} from the proof below,
was kindly communicated to us by C.~Krattenthaler~\cite{Kr06}.

\begin{proof}[Proof of Theorem~\textup{\ref{th2}}]
Let us start with \cite[Eq.~(III.9)]{GR04},
$$
{}_3\phi_2\biggl(\begin{matrix} a, \, b, \, c \\ d, \, e \end{matrix} \biggm|q,\frac{de}{abc}\biggr)
=\frac{(e/a,de/(bc);q)_\infty}{(e,de/(abc);q)_\infty}
\cdot{}_3\phi_2\biggl(\begin{matrix} a, \, d/b, \, d/c \\ d, \, de/(bc) \end{matrix} \biggm|q,\frac ea\biggr).
$$
Letting $b$ tend to $\infty$ we obtain
\begin{equation}
{}_2\phi_2\biggl(\begin{matrix} a, \, c \\ d, \, e \end{matrix}
\biggm|q,\frac{de}{ac}\biggr)
=\frac{(e/a;q)_\infty}{(e;q)_\infty}
\cdot{}_2\phi_1\biggl(\begin{matrix} a, \, d/c \\ d \end{matrix} \biggm|q,\frac ea\biggr).
\label{E03}
\end{equation}
Further, letting $c$ tend to $\infty$ we deduce
\begin{equation}
{}_1\phi_2\biggl(\begin{matrix}
a \\ d, \, e \end{matrix}
\biggm|q,\frac{de}a\biggr)
=\frac{(e/a;q)_\infty}{(e;q)_\infty}
\cdot{}_2\phi_1\biggl(\begin{matrix}
a, \, 0 \\ d \end{matrix}
\biggm|q,\frac ea\biggr).
\label{E04}
\end{equation}
The particular case $a=q$, $d=bq$ and $e=tq$ of~\eqref{E04} is exactly identity~\eqref{E01}.

Setting $x=d/c$, $z=e/a$ in the intermediate equation~\eqref{E03} we may write it as follows:
$$
\frac{(z;q)_\infty}{(az;q)_\infty}
\cdot{}_2\phi_1\biggl(\begin{matrix} a, \, x \\ cx \end{matrix} \biggm|q,z\biggr)
={}_2\phi_2\biggl(\begin{matrix} a, \, c \\ cx, \, az \end{matrix} \biggm|q,xz\biggr).
$$
Then taking $a=c=q$ we obtain
\begin{align*}
(1-x)\sum_{n=0}^\infty\frac{z^n}{1-q^nx}
&={}_2\phi_1\biggl(\begin{matrix} q, \, x \\ qx \end{matrix} \biggm|q,z\biggr)
=\frac{(qz;q)_\infty}{(z;q)_\infty}
\cdot{}_2\phi_2\biggl(\begin{matrix} q, \, q \\ qx, \, qz \end{matrix} \biggm|q,xz\biggr)
\\
&=\frac1{1-z}\sum_{n=0}^\infty
\frac{(q;q)_n}{(qx,qz;q)_n}\,q^{n(n-1)/2}(-xz)^n.
\end{align*}
Finally, performing simple manipulations we cast the resulting identity as \eqref{e01c}. This concludes our proof of the theorem.
\end{proof}

\section{Related questions}
\label{sfin}

As already pointed out, there are no general methods at present to prove the irrationality of the values of $q$-hypergeometric series
like \eqref{e01b}, when $q$-Pochhammer products appear in the numerators of the terms, for a sufficiently generic set of parameters.

A different from~\eqref{e01b} three-parameter generalization of $\ell_p(x,z)$ can be considered,
which is suggested by rational Pad\'e-type approximations in \cite{BV05,BV09,SvA09,Zu02a,Zu06} that generalize
the approximations to the $q$-logarithm function. Namely, we introduce the function
\begin{equation}
\cL_p(x,y,z)
=\sum_{n=1}^\infty\frac{p^nz^n}{(p^n-x)(p^n-y)}
=\begin{cases}
\dfrac{\ell_p(x,z)-\ell_p(y,z)}{x-y} &\text{if $x\ne y$}, \\[1.5mm]
\dfrac{\d}{\d x}\ell_p(x,z) &\text{if $x=y$};
\end{cases}
\label{e02}
\end{equation}
where as in \eqref{e01} the variables $x$, $y$, and $z$ live inside the circle
of radius $|p|$ centred at the origin. The function~\eqref{e02}
satisfies the obvious symmetry relation $\cL_p(x,y,z)=\cL_p(y,x,z)$ and also the functional equation
\begin{equation*}
(x-y)\cL_p(x,y,z)
+(y-z)\cL_p(y,z,x)
+(z-x)\cL_p(z,x,y)
=0,
\end{equation*}
while the $x=y$ specialization
\begin{equation}
\cL_p(x,x,z)
=\sum_{n=1}^\infty\frac{p^nz^n}{(p^n-x)^2}
=\frac zx\sum_{n=1}^\infty\frac{nx^n}{p^n-z}
\label{e03}
\end{equation}
contains the $q$-dilogarithmic function~\cite{Zu06} as the case $z=1$.
In particular, the first two $q$-zeta values \cite{Zu02b} are given by
$$
\zeta_q(1)=\ell_p(1,1)
\quad\text{and}\quad
\zeta_q(2)=\cL_p(1,1,1).
$$
Some further arithmetically interesting specializations include
\begin{gather*}
\cL_p(1,-1,1)=\sum_{n=1}^\infty\frac{p^n}{p^{2n}-1},
\quad
\cL_p(i,-i,1)=\sum_{n=1}^\infty\frac{p^n}{p^{2n}+1}=\frac{\pi_q-1}4,
\\
\cL_p(e^{2\pi i/3},e^{-2\pi i/3},1)=\sum_{\nu=1}^\infty\frac{p^n}{p^{2n}+p^n+1}
\\ \intertext{as well as}
\cL_p(-1,-1,1)=\cL_p(1,1,1)-4\cL_{p^2}(1,1,1)=\zeta_q(2)-4\zeta_{q^2}(2).
\end{gather*}

Another generalization of the function~\eqref{e01b} is motivated by the fact that it is a ``half'' of Kronecker's famous identity
\begin{gather*}
\frac1{qx}\,\ell_p(x,z/q)-z\ell_p(1/(qx),1/z)
=\sum_{n\in\mathbb Z}\frac{z^n}{1-q^nx}
=\frac{(q,q,xz,q/(xz);q)_\infty}{(x,q/x,z,q/z;q)_\infty},
\\
|q|<|z|<1,
\end{gather*}
and the existence, though much more recent, of an elegant expression for the double Lambert series
\begin{gather*}
\biggl(\sum_{m,n\ge0}-\sum_{m,n<0}\biggr)\frac{q^{mn}y^mz^n}{1-q^{m+n}x}
=\biggl(\sum_{l,m,n\ge0}-\sum_{l,m,n<0}\biggr)q^{lm+mn+nl}x^ly^mz^n,
\\
|q|<|x|<1, \quad |q|<|y|<1, \quad |q|<|z|<1,
\end{gather*}
due to E.~Mortenson~\cite{Mo16}. This brings to consideration the function
\begin{align*}
\Lambda_p(x,y,z)
&=\sum_{l,m,n>0}q^{lm+mn+nl}x^ly^mz^n
=x\sum_{m,n>0}\frac{(qy)^m(qz)^n}{1-q^{m+n}x}
\\
&=x\sum_{m,n>0}\frac{y^mz^n}{p^{m+n}-x},
\qquad |y|<|p|, \quad |z|<|p|.
\end{align*}

We leave a hope that arithmetic studies of the values of the functions may use the techniques of the present note.

\medskip
\noindent
\textbf{Acknowledgements.}
I would like to thank Christian Krattenthaler for his earlier communication~\cite{Kr06}, Eric Mortenson for sharing his
remarkable identity from~\cite{Mo16} with me as well as Igor Rochev and Armin Straub for useful comments.


\end{document}